\documentclass[reqno,11pt]{amsart}

\usepackage{amsmath,amsfonts,amssymb,amsthm,epsfig}
\usepackage{tikz-cd}
\usetikzlibrary{arrows}

\voffset=-1.5cm \textheight=23cm \hoffset=-.5cm \textwidth=16cm
\oddsidemargin=1cm \evensidemargin=-.1cm
\footskip=35pt \linespread{1.25}
\parindent=20pt

\sloppy \allowdisplaybreaks
\numberwithin{equation}{section}

\font\script=rsfs10 at 11pt
\def\eps{\varepsilon}
\def\H{{\mbox{\script H}\,\,}}

\def\F{\mathfrak F}
\def\En{\mathfrak R}
\def\R{\mathbb R}
\def\S{\mathbb S}

\def\angle#1#2#3{#1\widehat{#2}#3}
\def\bal{\begin{aligned}}
\def\eal{\end{aligned}}
\def\proofof#1{\begin{proof}[Proof of #1]}

\def\XXint#1#2#3{{\setbox0=\hbox{$#1{#2#3}{\int}$} \vcenter{\vspace{-1pt}\hbox{$#2#3$}}\kern-.5\wd0}}

\def\comp{\subset\subset}


\newcounter{mt}
\def\maintheorem#1#2#3{\par \medskip \noindent {\bf Theorem~\mref{#1}}~(#2).~{\it #3}\par}
\def\mref#1{\Alph{#1}}
\def\maintheoremdeclaration#1{\stepcounter{mt}\newcounter{#1}\setcounter{#1}{\arabic{mt}}}

\maintheoremdeclaration{main}

\newtheorem{theorem}{Theorem}[section]
\newtheorem{lemma}[theorem]{Lemma}

\begin{document}

\title[Minimality of balls in the small volume regime\dots]{Minimality of balls in the small volume regime for a general Gamow type functional}

\author{D. Carazzato}
\author{N. Fusco}
\author{A. Pratelli}

\begin{abstract}
We consider functionals given by the sum of the perimeter and the double integral of some kernel $g:\R^N\times\R^N\to \R^+$, multiplied by a ``mass parameter'' $\eps$. We show that, whenever $g$ is admissible, radial and decreasing, the unique minimizer of this functional among sets of given volume is the ball as soon as $\eps\ll 1$.
\end{abstract}

\maketitle

\section{Introduction}

The celebrated ``liquid drop model'' for the atomic nucleus, introduced in the '30s by Gamow, consists in the minimization of the functional
\[
P(E) + \iint_{E\times E} \frac 1{|y-x|^{N-\alpha}}\,dy\,dx
\]
among sets of given volume in $\R^N,\, N\geq 2$, where $0<\alpha<N$ is a given parameter. Even though the physically relevant case is $N=3,\, \alpha=2$, when the  second term is the Coulombic energy, this more general functional has been deeply investigated since then, both by physicists and  mathematicians. There is a clear competition between the two terms in the energy, since the ball at the same time minimizes the perimeter, by the isoperimetric inequality, and maximizes the second term, by the Riesz inequality. More in general, the first term favours concentration of mass, while the second one favours disgregation. An important peculiarity of the model is that the two terms scale differently, in particular the perimeter is the leading term for sets of small volume, while the Riesz energy (i.e., the second term) is the leading term for large volumes. By rescaling, instead of considering different masses, it is equivalent but mathematically more convenient to consider only sets of given volume, say $\omega_N$, the measure of the unit ball, and consider the modified functional
\[
P(E) + \eps\, \iint_{E\times E} \frac 1{|y-x|^{N-\alpha}}\,dy\,dx
\]
for some positive $\eps$. As said above, it is then clear that minimizers become closer to the ball when $\eps\searrow 0$, while they tend to disperse completely when $\eps\nearrow +\infty$.\par

In fact, physicists always took for granted that minimizers are \emph{exactly} balls if $\eps$ is sufficiently small. This property has been investigated by several mathematicians and  proved in a series of recent papers. More precisely, Kn\"upfer and Muratov~\cite{KM1,KM2} proved that balls are the only minimizers for $\eps\ll 1$ when $N=2$, and when $3\leq N\leq 7$ if $1<\alpha<N$, see also the proof given by Julin in \cite{J} in the case $\alpha=2$ . One should  point out that there are big differences between the case $0<\alpha\leq 1$, the so-called \emph{near-field dominated regime}, and the case $1<\alpha<N$, the so-called \emph{far-field dominated regime}. Roughly speaking,  things are much more complicated in the near-field dominated regime since in the Riesz energy the contribution of pairs of points which are very close to each other is fairly strong (the mathematical consequence is that several objects which are controlled in the far-field dominated regime become ill-defined due to some integrals which do not converge). Later on, Bonacini and Cristoferi~\cite{BC} proved the same result for every $N$, still with $1<\alpha< N$. And finally, Figalli, Fusco, Maggi, Millot and Morini~\cite{F2M3} proved the result in any dimension $N\geq 2$ and for every $0<\alpha<N$, even replacing the perimeter $P(E)$ by the fractional perimeter $P_s(E),\, 0<s\leq1$.\par

Countless papers in the last few years  investigated the properties of the Riesz energy if one replaces the kernel $|y-x|^{\alpha-N}$ with $g(y-x)$ for a more general $g:\R^N\setminus \{0\}\to \R^+$, hence considering the ``Riesz-type energy''
\begin{align*}
\En(E)=\En(E,E)\,, && \hbox{where} && \En(F,G) =  \iint_{F\times G} g(z-w)\,dz\,dw\,.
\end{align*}
As soon as $g$ is radial and decreasing, the Riesz inequality still implies that,  among sets of given volume, $\En(E)$ is maximized by the ball. The corresponding ``Gamow-type functional'' is then
\[
\F_\eps(E) = P(E) + \eps\, \En(E)\,.
\]
A natural question is whether it is possible to show that $\F_\eps$ is minimized by balls when $\eps\ll 1$ for more general functions $g$ than the negative powers. Observe that this question is reasonable only if $g$ is radial, hence we will always make this assumption and  write, with a small abuse of notation, $g(t)=g(x)$ for any $x\in\R^N$ with $|x|=t$. We say that a radial function $g:\R^N\setminus \{0\} \to \R^+$ is \emph{admissible} if and only if
\begin{equation}\label{necessary}
\int_0^1 g(t) t^{N-1}\, dt<+\infty\,.
\end{equation}
The meaning of this property is clear. Indeed, without this assumption $\En(E)=+\infty$ for every non-empty set $E$, hence the whole problem makes no sense, but with this assumption all sets of finite volume have finite energy. In the recent paper~\cite{NP} it was proved that, among sets of given volume, balls are the unique minimizers of $\F_\eps$ for $\eps$ small enough in the $2$-dimensional case if $g$ is radial, decreasing and positive definite, which means 
\[
\iint_{\R^N\times\R^N} g(y-x)f(x)f(y)\,dy\,dx \ge 0 \qquad \forall f\in C_c(\R^N).
\]
In particular, in case $g(v)=|v|^{\alpha-N}$, the positive definiteness is equivalent to the assumption $1<\alpha<N$.\par

In this paper, we are able to give a simple proof of the minimality of balls for every radial and decreasing function $g$ and in any dimension $N\geq 2$. Despite considering a much more general function than in the above-mentioned papers, our proof is considerably shorter than those available in the literature.

\maintheorem{main}{Balls are unique minimizers for small $\eps$}{Let $g:\R^N\setminus\{0\}\to \R^+$ be a radially decreasing, admissible function. Then, there exists $\bar\eps>0$ such that, for every $0<\eps<\bar\eps$, the unique minimizer (up to translations) of $\F_\eps$ among sets of volume $\omega_N$ is the unit ball.}

The fact that minimizers of $\F_\eps$ actually exist, for $\eps$ small enough, has been proved in wider generality in~\cite[Proposition~1.2 and Lemma~4.1]{NP}

\section{Proof of the main result}

In the following we  denote by $B(z,r)$ the ball  centered at $z$ with radius $r$. When $r=1$ we simply write $B(z)$ (or $B$ if the center is the origin). Moreover, in order to ease the notation, if   no confusion arises, to  indicate integration with respect to $\H^{N-1}$ measure we shall often write  $dx$ instead of $d\H^{N-1}(x)$.

This section is devoted to the proof of our result. The first step is to observe that the minimizers of the functional $\F_\eps$ converge in ${\rm C}^{1,\gamma}$ to the ball, up to translations. This is a fairly standard fact, see for instance~\cite[Proposition~2.2 and Lemma~3.6]{CL}. More precisely, we have the following result.
\begin{lemma}[Regularity of minimizers]\label{lemreg}
There exists $\eps_1>0$, only depending on $N$ and  $g$, such that for every $0<\eps<\eps_1$ and every minimizer $E$ of $\F_\eps$ under the volume constraint $|E|=\omega_N$, there exists a function $u\in W^{1,2}(\S^{N-1})$ such that, up to a translation,
\begin{equation}\label{defEu}
E= E(u) = \Big\{ z\in \R^N:\, z=\rho x,\, x\in\S^{N-1},\, 0\leq \rho < 1+u(x)\Big\}\,.
\end{equation}
The function $u$ actually belongs to ${\rm C}^{1,\gamma}$ for every $0<\gamma<1/2$, and its norm can be taken arbitrarily small, up to decrease the value of $\eps_1$. Moreover,
\begin{equation}\label{marcogippo}
P(E)\geq P(B) + C_N \|u\|_{W^{1,2}}^2
\end{equation}
for a geometric constant $C_N$, only depending on $N$.
\end{lemma}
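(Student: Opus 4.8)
The plan is to follow the by now classical scheme (see~\cite{CL}): to show that any minimizer $E$ of $\F_\eps$ under the volume constraint is a $(\Lambda,r_0)$-minimizer of the perimeter --- i.e. $P(E)\le P(F)+\Lambda|E\triangle F|$ whenever $F\triangle E\comp B(x,r)$ with $r<r_0$ --- with $\Lambda$ and $r_0$ bounded uniformly for $\eps$ small, that $E\to B$ in $L^1$ as $\eps\to 0$, and then to combine the regularity and compactness theory for almost-minimizers of the perimeter with Fuglede's quadratic estimate for nearly spherical sets. The only ingredient that must be verified for a general admissible kernel, as opposed to a negative power, is that the nonlocal term $\En$ acts as a lower order perturbation of the perimeter under local variations.

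The key estimate is that, if $E,F\subset\R^N$ satisfy $|E|,|F|\le\omega_N$, then
\[
|\En(E)-\En(F)|\le C\,|E\triangle F|\,,\qquad C=C(N,g)\,.
\]
To see this, write $\En(E)-\En(F)$ as a signed sum of the cross terms $\En(E\cap F,E\setminus F)$, $\En(E\setminus F)$, $\En(E\cap F,F\setminus E)$ and $\En(F\setminus E)$; then, for every fixed $w$, the radial monotonicity of $g$ together with the bound on the measures of the sets give
\[
\int_A g(z-w)\,dz\le\int_{B(w,1)}g(z-w)\,dz = N\omega_N\int_0^1 g(t)\,t^{N-1}\,dt\,,
\]
which is finite \emph{exactly} because $g$ is admissible in the sense of~\eqref{necessary}; integrating in $w$ over the ``small'' sets $E\setminus F$, $F\setminus E$ gives the claim. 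Combining this with the standard volume-fixing lemma (one restores the volume constraint of a local competitor by a dilation localized near a point of its reduced boundary, at a perimeter cost linear in the volume defect) and with the penalization argument of~\cite{CL}, one obtains that any minimizer $E$ of $\F_\eps$ is a $(\Lambda,r_0)$-minimizer of the perimeter with $\Lambda$ and $r_0$ depending only on $N$ and $g$.

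Next, comparing $E$ with the unit ball and using the Riesz inequality $\En(E)\le\En(B)$ yields $P(E)-P(B)\le\eps(\En(B)-\En(E))\le\eps\,\En(B)$, so the quantitative isoperimetric inequality forces $\min_x|E\triangle B(x,1)|\le C'\sqrt{\eps\,\En(B)}\to 0$ as $\eps\to0$; thus, up to translations, $E\to B$ in $L^1$. At this point the $\eps$-regularity and compactness theory for $(\Lambda,r_0)$-minimizers of the perimeter (De Giorgi, Almgren and Tamanini; see again~\cite{CL}) applies and gives that $\partial E$ is of class $C^{1,\gamma}$ for every $\gamma<1/2$ and that $\partial E\to\partial B$ in $C^{1,\gamma}$; hence, up to a translation, $E=E(u)$ with $u\in C^{1,\gamma}$ and $\|u\|_{C^{1,\gamma}}\to 0$ as $\eps\to0$, so in particular $\|u\|_{W^{1,2}}$ is as small as we wish once $\eps_1$ is taken small enough. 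This proves every assertion of the statement except~\eqref{marcogippo}.

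Finally, for~\eqref{marcogippo} we fix the translation so that the barycenter of $E=E(u)$ is at the origin. Since $|E|=\omega_N$ pins down the zeroth spherical harmonic of $u$, the vanishing barycenter pins down the first spherical harmonics, and $\|u\|_{C^1}$ is small, Fuglede's second order expansion of the perimeter of nearly spherical sets gives $P(E(u))\ge P(B)+C_N\|u\|_{W^{1,2}}^2$ with $C_N$ dimensional. I expect the deepest tool to be the regularity and compactness step for almost-minimizers, which is what upgrades the $L^1$-closeness of $E$ to $B$ into the $C^{1,\gamma}$-closeness needed to run Fuglede's argument; all the rest is either the elementary bound on $\En$ above --- the only spot where the general admissible kernel genuinely enters --- or a classical computation.
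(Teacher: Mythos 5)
Your proposal is correct and follows essentially the same scheme as the paper: show that minimizers are $(\Lambda,r_0)$-perimeter minimizers with uniform constants and converge in $L^1$ to the ball, upgrade this to $C^{1,\gamma}$ closeness via the regularity/compactness theory for almost-minimizers, and conclude with Fuglede's quadratic estimate (with the barycenter normalization you correctly point out). The only difference is that the paper simply cites~\cite{NP} for the $\Lambda$-minimality and the $L^1$ convergence, while you sketch them directly --- the Lipschitz bound $|\En(E)-\En(F)|\le C\,|E\triangle F|$ via the bathtub principle and admissibility~\eqref{necessary}, and the Riesz inequality combined with the quantitative isoperimetric inequality --- and both of these sketches are sound.
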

\begin{proof}
Recall that, given two constants $\Lambda,\, r_0>0$, a set of finite perimeter  $E\subseteq \R^N$ is said to be a $(\Lambda, r_0)$-\emph{perimeter minimizer} if for every ball $B(z,r)$ with $r<r_0$ and every set $F$ such that $F\Delta E\comp B(z,r)$ one has
\[
P(E)\leq P(F) + \Lambda |E\Delta F|\,.
\]
It is easily checked that if $E_\eps$ is a minimizer of  $\F_\eps$ under the volume constraint $|E_\eps|=\omega_N$, then $E_\eps$ is a $(\Lambda,r_0)$-perimeter minimizer for every small $\eps$, with $\Lambda$ and $r_0$ not depending on $\eps$. A proof of this fact in a more general setting can be found in~\cite[Proposition~3.6]{NP}. Moreover, as already noticed, , the sets $E_\eps$ converge up to translations in $L^1$ to $B$ as $\eps$ goes to $0$. Thus~\cite[Theorem~21.14]{M} implies that they also converge in the Kuratowski sense, hence their boundaries are contained in an arbitrarily small neighborhood of $\partial B$, provided $\eps$ is small enough. Since one can cover $\partial B$ with finitely many cylinders with arbitrarily small excess (see~\cite[Chapter~22]{M} for the definition of the excess), by~\cite[Proposition~22.6]{M} also  $E_\eps$ has arbitrarily small excess in the same cylinders if $\eps$ is small enough. As a consequence, \cite[Theorem~26.3]{M} ensures that $\partial E_\eps$ is ${\rm C}^{1,\gamma}$  for every $0<\gamma<1/2$, with uniform bounds. Again, since $E_\eps$ converges in the $L^1$ sense to $B$ as $\eps$ goes to $0$,  by interpolation the convergence holds also in ${\rm C}^{1,\gamma}$. Thus, for $\eps$ small enough $\partial E_\eps$ is a graph over $\S^{N-1}$, hence there exists a function  $u_\eps$ such that~(\ref{defEu}) holds.\par

Finally, the estimate~(\ref{marcogippo}) is a result by Fuglede, proved in~\cite{F} (see also~\cite{CL}).
\end{proof}

The second observation is that the $W^{1,2}$ norm of a function $u$ controls the double integral of $(u(y)-u(x))^2$ with weight $g$. In other words, we have the following result.

\begin{lemma}\label{genest}
For every Sobolev function $u\in W^{1,2}\big(\S^{N-1}\big)$, we have
\[
\iint_{\S^{N-1}\times\S^{N-1}} g(y-x)\big(u(y)-u(x)\big)^2\, dy\,dx \leq C\|\nabla_\tau u\|_{L^2(\S^{N-1})}^2\,,
\]
where $C$ is a constant, only depending on $N$ and  $g$, and $\nabla_\tau$ stands for the tangential gradient.
\end{lemma}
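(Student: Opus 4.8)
The plan is to reduce the double integral over $\S^{N-1}\times\S^{N-1}$ to a more manageable quantity by exploiting the fact that $g$ is admissible, so that $\int_0^1 g(t)t^{N-1}\,dt<\infty$, while on the sphere we only ever see distances $|y-x|\leq 2$, and the only possible singularity of $g$ is at the origin. First I would split the kernel: fix a threshold, say distance $1$, and write the integral as the sum of the contribution from pairs with $|y-x|\geq 1$ and pairs with $|y-x|<1$. On the region $|y-x|\geq 1$ the kernel is bounded by $g(1)$ (since $g$ is radially decreasing), so that piece is at most $g(1)\iint (u(y)-u(x))^2\,dy\,dx\leq 4g(1)\|u\|_{L^2}^2$, and the Poincaré inequality on $\S^{N-1}$ (valid after noting that we may subtract the average of $u$, which does not change $u(y)-u(x)$) bounds $\|u\|_{L^2}^2$ by $C\|\nabla_\tau u\|_{L^2}^2$. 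So the real work is the near-diagonal part.

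For the near-diagonal part, the key geometric point is that for $x,y\in\S^{N-1}$ with $|y-x|<1$ one can connect $x$ to $y$ by the shorter geodesic arc, whose length $\ell(x,y)$ is comparable to $|y-x|$, and along which one controls $u(y)-u(x)$ by the tangential gradient via the fundamental theorem of calculus: writing $\gamma$ for the unit-speed geodesic, $(u(y)-u(x))^2 = \big(\int_0^{\ell}\nabla_\tau u(\gamma(s))\cdot\dot\gamma(s)\,ds\big)^2\leq \ell\int_0^{\ell}|\nabla_\tau u(\gamma(s))|^2\,ds$ by Cauchy–Schwarz. Plugging this into the double integral and using that $g(|y-x|)\leq g(c\,\ell)$ for a suitable constant, I would interchange the order of integration (Fubini–Tonelli, everything being nonnegative) to integrate first in the pair $(x,y)$ at fixed arc-point. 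Concretely, parametrize pairs at a given geodesic distance $r\in(0,1)$ from each other: the set of such pairs, together with the midpoint data, has a measure that factors, after a change of variables, as $d\H^{N-1}(z)$ times $r^{N-2}\,dr$ times the length element $ds$ with $0\le s\le r$; carrying out the $s$-integration gives a factor $r$, and one is left with $\|\nabla_\tau u\|_{L^2}^2$ times $\int_0^1 g(cr)\, r\cdot r^{N-2}\,dr = \int_0^1 g(cr) r^{N-1}\,dr$, which is finite precisely by admissibility (after the harmless rescaling $r\mapsto cr$). This yields the claimed bound with $C$ depending only on $N$ and $g$.

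The step I expect to be the main obstacle is making the change of variables in the near-diagonal term fully rigorous: one must check that the map sending $(x,y)$ (with the auxiliary geodesic parameter $s$) to (base point on the sphere, geodesic distance $r$, parameter $s$) has the Jacobian structure claimed, so that the $s$-integration really produces the extra power of $r$ needed to convert $r^{N-2}$ into the $r^{N-1}$ weight of the admissibility condition. An equivalent and perhaps cleaner route, which I would fall back on if the spherical geometry gets messy, is to first establish the analogous inequality on a cube or ball in $\R^{N-1}$ by the same FTC-along-segments argument (where the change of variables $(x,y)\mapsto (x, y-x)$ is trivial), and then transfer it to $\S^{N-1}$ via a finite atlas of bi-Lipschitz coordinate charts, absorbing the distortion constants into $C$; since only distances $\lesssim 1$ matter and $g$ is decreasing, comparing $g$ of comparable arguments costs only a constant. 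Either way, the admissibility hypothesis \eqref{necessary} is exactly what is needed, and is used exactly once, to guarantee the finiteness of the weight integral.
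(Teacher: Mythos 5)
Your proposal is correct in outline and shares the paper's core mechanism: control $u(y)-u(x)$ by the fundamental theorem of calculus along the connecting geodesic arc, apply Cauchy--Schwarz, and use admissibility~(\ref{necessary}) exactly once to make a weight integral of the form $\int_0^1 g(cr)\,r^{N-1}\,dr$ finite (your near/far splitting with Poincar\'e on the far region is harmless but not needed, since the geodesic argument works for all pairs). Where you diverge is precisely at the step you flag as the main obstacle: your primary route needs the factorization of the measure of (pair, geodesic parameter) into (point on the sphere, distance, parameter), i.e.\ essentially the invariance of the Liouville measure under the geodesic flow; this is true and provable, but you leave it as a claim, and your bi-Lipschitz-chart fallback, while sound and elementary, is what would actually close it. The paper sidesteps this issue entirely with a small trick: after Cauchy--Schwarz it enlarges the arc integral $\int_0^\theta$ to the full half great circle $\int_0^\pi$, so the $\theta$-integration decouples and produces the admissibility constant $\int_0^\pi g(\theta/2)\theta(\sin\theta)^{N-2}d\theta$, and then it restores the missing spherical Jacobian by inserting the factor $(\sin\varphi)^{N-2}\big(1-|x\cdot y|\big)^{-(N-2)/2}\geq 1$, which lets it convert back via the elementary polar formula~(\ref{simplefor}) and Fubini, at the price of the finite constant $\int_{\S^{N-1}}(1-|x\cdot{\rm e}_1|)^{-(N-2)/2}d\H^{N-1}$. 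So the paper's execution avoids any nontrivial change of variables, while your route (once the Liouville/chart step is written out) is a natural alternative giving the same dependence of $C$ on $N$ and $g$; minor quibbles only: on the sphere the radial Jacobian is $(\sin r)^{N-2}$ rather than $r^{N-2}$ (comparable for $r<1$), and your far-field bound should carry a factor $|\S^{N-1}|$, both immaterial.
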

\begin{proof}
For $x\in\S^{N-1}$, set $E_x=\{\omega\in\S^{N-1}:\omega\perp x\}$. Observe that  any $y\in\S^{N-1}\setminus\{x,-x\}$ can be written in a unique way as $y=x\cos\theta+\omega\sin\theta$ for some $\omega\in E_x$ and $\theta\in(0,\pi)$. Moreover for any $f\in L^1(\S^{N-1})$
\begin{equation}\label{simplefor}
\int_{\S^{N-1}} f\,d\H^{N-1}=\int_{E_x}\int_0^\pi f(x\cos\theta+\omega\sin\theta)(\sin\theta)^{N-2}\,d\theta\,d\H^{N-2}(\omega)\,.
\end{equation}
We can then write, for every fixed $x\in\S^{N-1}$,
\[\begin{split}
\int_{\S^{N-1}} &g(y-x)\big(u(y)-u(x)\big)^2 \,d\H^{N-1}(y)\\
&=\int_{E_x}\int_0^\pi g(x\cos\theta+\omega\sin\theta-x)\Big(u(x\cos\theta+\omega\sin\theta)-u(x)\Big)^2(\sin\theta)^{N-2}\,d\theta\,d\H^{N-2}(\omega)\,.
\end{split}\]
Now, for any $\omega\in E_x$ and any $0<\theta<\pi$, recalling that $\omega\cdot x=0$ we evaluate
\[\begin{split}
\big(u(x\cos\theta+\omega&\sin\theta)-u(x)\big)^2
=\bigg(\int_0^1\theta\langle\nabla_\tau u(x\cos(s\theta)+\omega\sin(s\theta)),-x\sin(s\theta)+\omega\cos(s\theta)\rangle\,ds\bigg)^2\\
&\leq \theta^2 \int_0^1|\nabla_\tau u(x\cos(s\theta)+\omega\sin(s\theta))|^2ds
=\theta \int_0^\theta|\nabla_\tau u(x\cos\varphi+\omega\sin\varphi)|^2d\varphi\\
&\leq \theta \int_0^\pi|\nabla_\tau u(x\cos\varphi+\omega\sin\varphi)|^2d\varphi\,,
\end{split}\]
and since $g$ is radial and decreasing we have
\[
g(x\cos\theta+\omega\sin\theta-x) = g(\sqrt{2-2\cos\theta})\leq g(\theta/2)\,.
\]
Summarizing, we have
\[\begin{split}
\int_{\S^{N-1}} &g(y-x)\big(u(y)-u(x)\big)^2\, d\H^{N-1}(y)\\
&\leq \int_{E_x}\int_0^\pi g(\theta/2)(\sin\theta)^{N-2} \theta \int_0^\pi |\nabla_\tau u(x\cos\varphi+\omega\sin\varphi)|^2 \,d\varphi\,d\theta\, d\H^{N-2}(\omega)\\
&= C(g) \int_{E_x} \int_0^\pi |\nabla_\tau u(x\cos\varphi+\omega\sin\varphi)|^2 \,d\varphi\, d\H^{N-2}(\omega)\,,
\end{split}\]
where $C(g)$ is a constant which only depends on $g$, whose existence is ensured by~(\ref{necessary}). Therefore, using again~(\ref{simplefor}) and the fact that if $y=x\cos\varphi+\omega\sin\varphi$ with $\omega\in E_x$, then
\[
\sin^2\varphi = 1 - \cos^2\varphi = 1 - |x\cdot y|^2\geq 1 - |x\cdot y|\,,
\]
we calculate
\[\begin{split}
\iint_{\S^{N-1}\times\S^{N-1}} \hspace{-10pt}&\hspace{10pt} g(y-x)\big(u(y)-u(x)\big)^2\, dy\,dx\\
&\leq C(g) \int_{\S^{N-1}} \int_{E_x}\int_0^\pi |\nabla_\tau u(x\cos\varphi+\omega\sin\varphi)|^2 \,d\varphi\, d\H^{N-2}(\omega)\,d\H^{N-1}(x)\\
&\leq C(g) \int_{\S^{N-1}} \int_{E_x}\int_0^\pi \frac{|\nabla_\tau u(x\cos\varphi+\omega\sin\varphi)|^2 (\sin\varphi)^{N-2}}{\big(1 - |x\cdot y|\big)^{\frac{N-2}2}} \,d\varphi\, d\H^{N-2}(\omega)\,d\H^{N-1}(x)\\
&= C(g) \int_{\S^{N-1}}  \int_{\S^{N-1}} \frac{|\nabla_\tau u(y)|^2}{\big(1-|x\cdot y|\big)^{\frac{N-2}2}}\,d\H^{N-1}(y)\,d\H^{N-1}(x)\\
&= C(g)\int_{\S^{N-1}}|\nabla_\tau u(y)|^2d\H^{N-1}(y)\int_{\S^{N-1}}\frac 1{\big[1-|x\cdot \rm e_1|]^{\frac{N-2}{2}}}\,d\H^{N-1}(x) \\
&=C(g)C(N)\int_{\S^{N-1}}|\nabla_\tau u(y)|^2d\H^{N-1}_y\,,
\end{split}\]
where ${\rm e}_1$ is an arbitrary vector in $\S^{N-1}$. The proof is then concluded.
\end{proof}

For a given function $u\in W^{1,2}(\S^{N-1})$ with $u>-1$ everywhere, denoting by $E$ the set given by~(\ref{defEu}), we define now
\begin{align}\label{defE+E-}
E^+= E\setminus B\,, && E^-= B\setminus E\,,
\end{align}
so that
\begin{align*}
E^+ = \Big\{ \rho x,\, x\in\S^{N-1},\, 1\leq \rho < 1+u^+(x)\Big\}\,, &&
E^- = \Big\{ \rho x,\, x\in\S^{N-1},\, 1-u^-(x) < \rho < 1\Big\}\,,
\end{align*}
calling as usual $u^+=u\vee 0$ and $u^-=-u\vee 0$. Thanks to the above result, we deduce the following estimate.

\begin{lemma}[$\En(E^+,E^-)$ is ``negligible'']\label{+-neg}
Let $u\in W^{1,2}(\S^{N-1})$, with $|u|<1/2$. Then
\[
\En(E^+,E^-) \leq C \|u\|_{W^{1,2}}^2\,,
\]
where $C$ is a constant, only depending on $N$ and on $g$.
\end{lemma}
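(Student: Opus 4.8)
The plan is to pass to polar-type coordinates, use an elementary identity that decouples the radial and angular parts of $|\rho x-\sigma y|$, and then reduce everything to Lemma~\ref{genest} applied to a rescaled kernel.

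First I would write $z=\rho x$, $w=\sigma y$ with $x,y\in\S^{N-1}$. Since $|u|<1/2$, a point $z\in E^+$ has $\rho\in[1,3/2)$ and a point $w\in E^-$ has $\sigma\in(1/2,1]$, so the polar Jacobians $\rho^{N-1}\sigma^{N-1}$ are bounded by a dimensional constant $c_N$, and
\[
\En(E^+,E^-)\le c_N\int_{\S^{N-1}}\int_{\S^{N-1}}\int_1^{1+u^+(x)}\int_{1-u^-(y)}^1 g(\rho x-\sigma y)\,d\sigma\,d\rho\,dy\,dx\,,
\]
where the two inner integrals are empty unless $u(x)>0$ and $u(y)<0$.

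The key geometric step is the exact identity, obtained by expanding the scalar product and using $|x-y|^2=2-2\,x\cdot y$,
\[
|\rho x-\sigma y|^2=(\rho-\sigma)^2+\rho\sigma\,|x-y|^2\,.
\]
In the relevant range $\rho\sigma\ge 1/2$, so $|\rho x-\sigma y|\ge|x-y|/\sqrt2$, and since $g$ is radially decreasing, $g(\rho x-\sigma y)\le\tilde g(y-x)$, where $\tilde g$ is the radial function $\tilde g(t):=g(t/\sqrt2)$. A change of variables shows that $\tilde g$ is again radially decreasing and still satisfies the admissibility condition~(\ref{necessary}), since $\int_0^1\tilde g(t)t^{N-1}\,dt=2^{N/2}\int_0^{1/\sqrt2}g(\tau)\tau^{N-1}\,d\tau<+\infty$. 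The right-hand side no longer depends on $\rho$ and $\sigma$, so carrying out the two inner integrations gives
\[
\En(E^+,E^-)\le c_N\int_{\S^{N-1}}\int_{\S^{N-1}}\tilde g(y-x)\,u^+(x)\,u^-(y)\,dy\,dx\,.
\]

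Finally I would use the pointwise inequality $u^+(x)\,u^-(y)\le\frac14\big(u(y)-u(x)\big)^2$, valid for all $x,y$: where the left-hand side is nonzero one has $u(x)>0>u(y)$, hence $u(y)-u(x)=-(u^+(x)+u^-(y))$ and the bound is the arithmetic--geometric mean inequality, while otherwise the left-hand side vanishes. Plugging this in and applying Lemma~\ref{genest} to the kernel $\tilde g$ yields
\[
\En(E^+,E^-)\le\frac{c_N}{4}\iint_{\S^{N-1}\times\S^{N-1}}\tilde g(y-x)\big(u(y)-u(x)\big)^2\,dy\,dx\le C\,\|\nabla_\tau u\|_{L^2(\S^{N-1})}^2\le C\,\|u\|_{W^{1,2}}^2\,,
\]
with a constant $C$ depending only on $N$ and $g$, which is the claim. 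The only genuinely delicate point is the geometric step: one must keep the angular separation $|x-y|$ and feed it into Lemma~\ref{genest}, because the cruder estimate $g(\rho x-\sigma y)\le g(\rho-\sigma)$ would leave an integral of $g$ against a one-dimensional weight over a small square at the corner $\rho=\sigma=1$, which need not converge under~(\ref{necessary}) alone when $N\ge2$.
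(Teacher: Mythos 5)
Your proof is correct and follows essentially the same route as the paper's: project onto the sphere, bound $g(z-w)$ by a rescaled radial kernel of the angular separation (the paper asserts $|z-w|\ge|y-x|/2$ and uses $\tilde g(v)=g(v/2)$, while you derive the slightly sharper bound from the identity $|\rho x-\sigma y|^2=(\rho-\sigma)^2+\rho\sigma|x-y|^2$), integrate radially to get $u^+(x)u^-(y)$, dominate this by $(u(y)-u(x))^2$, and invoke Lemma~\ref{genest} for the rescaled kernel. The only differences are cosmetic (the explicit identity and the factor $1/4$ from AM--GM), and your closing remark about why the angular separation must be retained is accurate.
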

\begin{proof}
For every $z\in E^+$ and every $w\in E^-$, we write $x=z/|z|$ and $y=w/|w|$. Notice that $|z-w|\geq |y-x|/2$, thus since $g$ is radial and decreasing we have
\[
g(z-w) \leq \tilde g(y-x)\,,
\]
where we write for brevity, for every $v\in\R^N$, $\tilde g(v)=g(v/2)$. Observe that of course
\begin{equation}\label{tildegg}
\int_0^1 \tilde g(t)t^{N-1}\, dt \leq 2^N \int_0^1 g(s) s^{N-1}\, ds < +\infty
\end{equation}
by~(\ref{necessary}). Calling $\pi:\R^N\setminus \{0\}\to \S^{N-1}$ the projection on the unit sphere, we can then evaluate
\[\begin{split}
\En(E^+,E^-) &= \iint_{E^+\times E^-} g(z-w)\,dz\,dw\\
&\leq \iint_{\pi(E^+)\times\pi(E^-)} \int_{\rho=1}^{1+u^+(x)}\int_{\sigma=1-u^-(y)}^1 \tilde g(y-x)\rho^{N-1}\sigma^{N-1}\, d\rho\,d\sigma\, dy\,dx\\
&\leq 2^{N-1} \iint_{\pi(E^+)\times\pi(E^-)} u^+(x) u^-(y)\tilde g(y-x)\,dy\,dx\,.
\end{split}\]
Notice that, for every $x\in \pi(E^+)$ and $y\in\pi(E^-)$, we have $u^+(x)>0$ and $u^-(y)>0$, hence
\[
u^+(x) u^-(y) \leq \big(u^+(x)+u^-(y)\big)^2 = (u(x)-u(y))^2\,.
\]
Thus the above estimate can be continued as
\[\begin{split}
\En(E^+,E^-) &\leq 2^{N-1} \iint_{\pi(E^+)\times\pi(E^-)} (u(y)-u(x))^2\tilde g(y-x)\,dy\,dx\\
&\leq 2^{N-1} \iint_{\S^{N-1}\times\S^{N-1}} (u(y)-u(x))^2\tilde g(y-x)\,dy\,dx\leq C\| u\|_{W^{1,2}(\S^{N-1})}^2\,,
\end{split}\]
where in the last inequality we have used Lemma~\ref{genest} with $\tilde g$ in place of $g$, which is possible by~(\ref{tildegg}). Notice that the constant $C$ depends on $N$ and on $\tilde g$, then in turn on $N$ and on $g$.
\end{proof}

Since we will need to calculate integrals of $g$ over translated balls, it is useful to set $\psi:\R^+\times\R^+\to \R^+$ and $J:(-1/2,1/2)\to\R$ as
\begin{align}\label{defpsiJ}
\psi(a,b) = \int_{B(a)} g(|y-x|)\,dy \qquad \hbox{with } |x|=b\,, && J(\sigma)=\psi(1+\sigma,1)-\psi(1,1)\,.
\end{align}
It is simple to observe that $\psi$ is locally Lipschitz continuous outside the diagonal, but this is not helpful since we will need to use $\psi(a,b)$ with $a\approx b\approx 1$. However, the following weaker property will play a crucial role in our construction.
\begin{lemma}\label{estiK1}
There exists a constant $C=C(N,g)$ such that, for every $3/4\leq \rho\leq 5/4$ and every $-1/4\leq \tau\leq 1/4$ one has
\begin{equation}\label{claimK1}
\begin{array}{c}
\big|\psi(\rho+\tau,\rho)-\psi(\rho,\rho) -J(\tau)\big| \leq C |\rho-1|\,, \\
\big|\psi(1,1+\tau)-\psi(1,1)+J(\tau)\big| \leq C|\tau|\,, \\
|J(\tau)+J(-\tau)| \leq C|\tau|\,.
\end{array}
\end{equation}
\end{lemma}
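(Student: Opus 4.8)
The plan is to reduce each of the three inequalities to the estimate of a single one–dimensional integral of $g$ against the weight $r^{N-1}\,dr$ (or a more strongly vanishing weight), the point being that $g$ enjoys no regularity beyond being decreasing and satisfying \eqref{necessary}: we shall never compare two values $g(r_1),g(r_2)$ directly, and the whole of the smallness — a factor $|\rho-1|$ or $|\tau|$ — will be squeezed out of explicit elementary prefactors and of the thinness of the domains of integration. The recurring (and really the only) obstacle is exactly this absence of a modulus of continuity for $g$, so every comparison has to be organized so that the singular part is a \emph{literal common factor}, carried by integrals which are \emph{not} uniformly bounded but become so after multiplication by the small factor the geometry produces; the non‑Lipschitz dependence of the sliced one–dimensional integrands on the parameters near the critical radii $r\approx 0$ and $r\approx 2$ (worst in even dimensions) accounts for the bulk of the remaining technicalities, and the first inequality is where the real work sits. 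The starting observation is the identity $\psi(a,b)=\int_{B(b\,{\rm e}_1,a)}g(|z|)\,dz$, obtained from \eqref{defpsiJ} via the substitution $z=b\,{\rm e}_1-y$; it shows that $\psi$ is increasing in $a$, that $\partial_a\psi(a,b)=\int_{\partial B(b\,{\rm e}_1,a)}g(|z|)\,d\H^{N-1}(z)$ whenever $a\neq b$, and that $\psi(a,b)\le\int_{B(0,a+b)}g(|z|)\,dz=C_N\int_0^{a+b}g(r)r^{N-1}\,dr<+\infty$; finally \eqref{necessary} and the monotonicity of $g$ give $r^Ng(r)\to 0$ as $r\to0^+$, hence $\int_0^1 g(r)r^N\,dr<+\infty$. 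These are the only facts about $g$ that will be used.

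\emph{First inequality.} Write $\psi(\rho+\tau,\rho)-\psi(\rho,\rho)=\int_0^\tau\Psi_\rho(t)\,dt$ and $J(\tau)=\int_0^\tau\Psi_1(t)\,dt$, with $\Psi_c(t):=\partial_a\psi(c+t,c)=\int_{\partial B(c\,{\rm e}_1,\,c+t)}g(|z|)\,d\H^{N-1}(z)$, so it suffices to prove $|\Psi_\rho(t)-\Psi_1(t)|\le C|\rho-1|$ uniformly for $|t|\le1/4$ and integrate, using $|\tau|\le1/4$. The crucial geometric fact is that, for every $c$ and $t$, the sphere $\partial B(c\,{\rm e}_1,c+t)$ passes through the single point $-t\,{\rm e}_1$, at distance $|t|$ from the origin — the only place where $g$ is singular near these spheres as $t\to0$. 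Slicing $\Psi_c(t)$ along the level sets $\{|z|=r\}$ gives the explicit formula
\[
\Psi_c(t)=\frac{C_N\,(c+t)}{c^{N-2}}\int_{|t|}^{2c+t}g(r)\,r\,(r^2-t^2)^{\frac{N-3}2}\big((2c+t)^2-r^2\big)^{\frac{N-3}2}\,dr.
\]
Splitting the $r$–integral at $r=2|t|$ isolates the genuinely divergent contribution $\kappa(c,t)\int_{|t|}^{2|t|}g(r)\,r\,(r^2-t^2)^{\frac{N-3}2}\,dr$, where the prefactor simplifies to $\kappa(c,t)=C_N\big(1+t/c\big)^{\frac{N-1}2}$, which is independent of $c$ at $t=0$, so $|\kappa(\rho,t)-\kappa(1,t)|\le C|\rho-1|\,|t|$; and the substitution $r=|t|\,s$ shows that $|t|\int_{|t|}^{2|t|}g(r)\,r\,(r^2-t^2)^{\frac{N-3}2}\,dr\le C\,g(|t|)|t|^N\to0$, hence is uniformly bounded. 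The remaining part of the integral, over $r\ge2|t|$, is controlled by $\int_{2|t|}^{2c+t}g(r)r^{N-2}\,dr\le|t|^{-1}\int_0^{2c+t}g(r)r^{N-1}\,dr$, and its dependence on $c$ is $O(|\rho-1|)$ because the residual $c$–dependent factors are smooth and bounded on the relevant range. Putting these together gives $|\Psi_\rho(t)-\Psi_1(t)|\le C|\rho-1|$. (For $\tau<0$ one integrates $\Psi_\rho$ over $(\tau,0)$; nothing changes, the common point of the two spheres is again $-t\,{\rm e}_1$.)

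\emph{Third inequality.} Write $J(\tau)+J(-\tau)=\int_0^\tau\big(\phi(1+u)-\phi(1-u)\big)\,du$ with $\phi(s):=\partial_a\psi(s,1)=\int_{\partial B({\rm e}_1,s)}g(|z|)\,d\H^{N-1}(z)$, so it is enough to show $|\phi(1+u)-\phi(1-u)|\le C$ for $0<u\le1/4$. Slicing along $\{|z|=r\}$ gives $\phi(1\pm u)=C_N(1\pm u)\int_u^{2\pm u}g(r)\,r\,(r^2-u^2)^{\frac{N-3}2}\big((2\pm u)^2-r^2\big)^{\frac{N-3}2}\,dr$, where now both integrands carry the \emph{same} singular factor $g(r)\,r\,(r^2-u^2)^{\frac{N-3}2}$ near $r=u$. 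Writing $1+u=(1-u)+2u$ and $\int_u^{2+u}=\int_u^{2-u}+\int_{2-u}^{2+u}$, the difference $\phi(1+u)-\phi(1-u)$ splits into three pieces: one carrying the difference $\big((2+u)^2-r^2\big)^{\frac{N-3}2}-\big((2-u)^2-r^2\big)^{\frac{N-3}2}$, which vanishes identically for $N=3$ and is $O(u)$ in general (splitting the $r$–range at $r=1$ to cope with the non‑Lipschitz behaviour near $r=2$ when $N$ is even); one with an explicit factor $2u$ in front of an integral bounded by $C\,g(u)u^{N-1}+C/u$ (the two contributions coming from $r<2u$ via $r=u\,s$, and from $r\ge 2u$ via $g(r)r^{N-2}\le g(r)r^{N-1}/(2u)$), whence the product is $\le C\,g(u)u^N+C\le C$; and one supported on $r\in(2-u,2+u)$, where $g$ is bounded and the domain has measure $O(u^{(N-1)/2})\le O(u)$. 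Hence $|\phi(1+u)-\phi(1-u)|\le C$, which gives the bound after integration.

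\emph{Second inequality.} This follows from the other two together with one more elementary estimate. The first inequality, applied with $\rho=1+\tau$ and parameter $-\tau$ (admissible since $|\tau|\le1/4$), yields $|\psi(1,1+\tau)-\psi(1+\tau,1+\tau)-J(-\tau)|\le C|\tau|$, so
\[
\psi(1,1+\tau)-\psi(1,1)+J(\tau)=\big[\psi(1+\tau,1+\tau)-\psi(1,1)\big]+\big[J(\tau)+J(-\tau)\big]+O(|\tau|).
\]
The second bracket is $O(|\tau|)$ by the third inequality. For the first bracket, note that $B\big((1+\tau){\rm e}_1,\,1+\tau\big)=(1+\tau)B({\rm e}_1,1)$, hence for $\tau>0$ it contains $B({\rm e}_1,1)$ (and is contained in it for $\tau<0$), so $\psi(1+\tau,1+\tau)-\psi(1,1)$ is $\pm$ the integral of $g(|z|)$ over the region between the two balls. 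That region has measure $O(|\tau|)$, and near the origin — the unique singular point, lying on both boundary spheres — it is the thin set $\{\,\tfrac{|z|^2}{2(1+\tau)}<z_1<\tfrac{|z|^2}2\,\}$, of thickness $O(|\tau|\,|z'|^2)$, over which the integral of $g(|z|)$ is $\lesssim|\tau|\int_{\{|z'|<\varepsilon\}}g(|z'|)|z'|^2\,dz'=C|\tau|\int_0^\varepsilon g(r)r^N\,dr=O(|\tau|)$ by \eqref{necessary}; away from the origin $g$ is bounded and the contribution is $O(|\tau|)$ as well. Therefore the first bracket is $O(|\tau|)$, and the second inequality follows.
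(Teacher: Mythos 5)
Your overall route (write the radial differences as integrals of the spherical means $\Psi_c(t)=\int_{\partial B(c{\rm e}_1,c+t)}g(|z|)\,d\H^{N-1}$, slice them by $\{|z|=r\}$ to get an explicit one--dimensional kernel, and extract the smallness from the $c$- or $u$-dependence of the geometric prefactor rather than from any regularity of $g$) is different from the paper's, which never differentiates $\psi$ but instead compares volume integrals over tangent crescents and annuli by slicing with spheres centred at the near-singular point. Your third inequality is fine: only boundedness of $\phi(1+u)-\phi(1-u)$ is needed, and your three pieces are indeed bounded (the intermediate rates you claim, e.g.\ ``$O(u^{(N-1)/2})\le O(u)$'', fail for $N=2$, where the correct size near $r\approx 2$ is $O(\sqrt u)$, but this is harmless). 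Your deduction of the second inequality from the first, the third and the direct estimate $|\psi(1+\tau,1+\tau)-\psi(1,1)|\le C|\tau|$ is correct, and that last estimate is proved essentially as in the paper.

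The genuine gap is in the first inequality, in the range $r\ge 2|t|$. You bound that part of the kernel integral by $C/|t|$ and then assert its dependence on $c$ is $O(|\rho-1|)$ ``because the residual $c$-dependent factors are smooth and bounded''. That reasoning does not close: smoothness and boundedness of $h_{c,t}(r)=\frac{c+t}{c^{N-2}}\big((2c+t)^2-r^2\big)^{(N-3)/2}$ give a variation in $c$ of size $O(|\rho-1|)$ pointwise in $r$, and since $\int_{2|t|}^{1}g(r)r^{N-2}\,dr$ genuinely blows up as $t\to 0$ for admissible kernels in the near-field regime (e.g.\ $g(r)=r^{\alpha-N}$, $0<\alpha<1$), this only yields $O(|\rho-1|/|t|)$, which is useless. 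What makes the statement true is the same cancellation you exploited for $r\le 2|t|$, but with a second-order correction: one must show $\partial_c h_{c,t}(r)=O(|t|+r^2)$ uniformly for, say, $r\le 1$, so that the tail difference pairs with $|t|\int_{2|t|}^{1}g\,r^{N-2}\,dr\le\tfrac12\int_0^1 g\,r^{N-1}\,dr$ and with $\int_0^1 g\,r^{N}\,dr$, both finite by~(\ref{necessary}); you never state, let alone prove, the $O(r^2)$ term, and without it the step fails. Two further (fixable) defects in the same place: near the upper endpoint $r\approx 2c+t$ the factor $\big((2c+t)^2-r^2\big)^{(N-3)/2}$ is not Lipschitz in $c$ for $N\le 4$ (singular for $N=2$) and the endpoint itself moves with $c$, so this zone needs a separate, easy argument using that $g$ is bounded there; and your explicit formula $\kappa(c,t)=C_N(1+t/c)^{(N-1)/2}$ is not what the prefactor reduces to (it is proportional to $(1+t/c)(2+t/c)^{N-3}$), although the two properties you actually use --- independence of $c$ at $t=0$ and $c$-variation $O(|\rho-1|\,|t|)$ --- do hold for the correct expression.
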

\begin{proof}
The thesis will follow from three main estimates. To start, we take $1/2\leq r,\,r'\leq 3/2$, and we show that $|r-r'|$ controls $|\psi(r,r)-\psi(r',r')|$. Without loss of generality we assume that $r> r'$. Notice that $\psi(r,r)-\psi(r',r')$, by definition, is the integral of $g$ on the set $A(r,r')$ given by the difference of two balls, a bigger one with radius $r$ and a smaller one with radius $r'$, being the smaller one contained in the bigger one and internally tangent.
\begin{figure}[htbp]
\begin{tikzpicture}
\fill[blue!25!white] (-0.76,-2) arc (-41.81:41.81:3) -- (-0.34,2) arc (19.47:-19.47:6);
\draw[line width=1] (-.76,-2) arc (-41.81:41.81:3);
\draw[line width=1] (-.34,-2) arc (-19.47:19.47:6);
\fill (0,0) circle (1.8pt);
\draw (0,0) node[anchor=north west]{$O$};
\draw[angle 90-angle 90] (-4,0) -- (-0.08,0);
\draw (-2,0) node[anchor=south]{$r'$};
\draw (-3,-0.2) node[anchor=north]{$r$};
\draw[angle 90-angle 90] (-6,-0.2) -- (-0.08,-0.2);
\draw[angle 90-angle 90] (-0.75,0.2) -- (-.08,.2);
\draw (-0.375,0.2) node[anchor=south]{$\sigma$};
\draw[line width=1] (1.5,0) arc (0:105:1.5);
\draw[angle 90-angle 90] (0.08,0) -- (1.42,0);
\draw (0.75,0) node[anchor=north]{$t$};
\fill (-0.75,0) circle (1.8pt);
\draw (-0.75,0) node[anchor=south east]{$S$};
\fill (1.5,0) circle (1.8pt);
\draw (1.5,0) node[anchor=north west]{$R$};
\fill (-0.18,1.49) circle (1.8pt);
\draw (-0.18,1.49) node[anchor=south west]{$P$};
\fill (-0.39,1.45) circle (1.8pt);
\draw (-0.39,1.45) node[anchor=north east]{$Q$};
\draw[line width=0.5] (.4,0) arc (0:92:.4);
\draw (0.2,0.2) node[anchor=south west]{$\theta$};
\end{tikzpicture}
\caption{The (coloured) set $A(r,r')$ and the angle $\theta$ in the proof of~(\ref{biges1}) and~(\ref{biges2}).}\label{FigK1}
\end{figure}
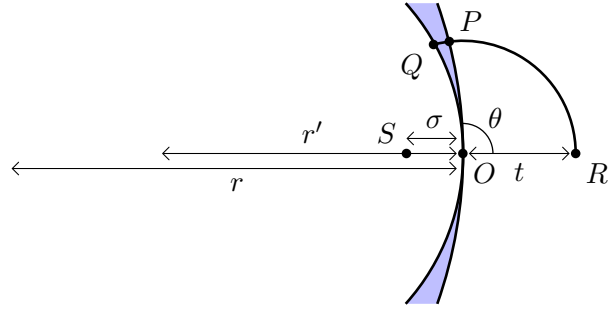
Figure~\ref{FigK1} shows the set $A(r,r')$, coloured, close to the point of tangency, that we consider to be the origin $O$. We also consider the exterior normal to the two balls in the tangency point to be horizontal (i.e., parallel to the first vector of a given orthonormal basis). Let us assume for a moment that $N=2$, just for simplicity in the figure. As shown in the figure, we fix $0<t<1/4$, and we call $R$ the point having distance $t$ from $O$ in the horizontal direction. We consider then the circle $S_2(t)$ with radius $t$ centered at $O$, we call $P$ one of the two points of intersection of $S_2(t)$ with the larger ball, and we denote by $\theta$ the angle $\angle ROP$. In the very same way, we call $Q$ a point of intersection between $S_2(t)$ and the smaller ball, and we call $\theta'$ the angle $\angle ROQ$. One readily has that $\cos\theta= -t/2r$, and similarly $\cos\theta'=-t/2r'$. Since by geometric reasons $\frac\pi 2<\theta<\theta'<\frac 34\,\pi$ because we are considering $0<t<1/4$, we get
\[
\theta'-\theta\leq 2 (\cos\theta-\cos\theta') = \frac{t(r-r')}{rr'}\leq 4 t(r-r')\,.
\]
Therefore, in the $2$-dimensional case, we can estimate for every $0<t<1/4$
\[
\H^1\big(A(r,r')\cap S_2(t)\big) = 2t(\theta'-\theta) \leq 8t^2(r-r')\,.
\]
Let us pass to the general $N$-dimensional case. Calling $A_2(r,r')$ the $2$-dimensional set already studied, we have in general
\[
A(r,r')= \Big\{ (z_1,z')\in \R\times\R^{N-1}:\, (z_1,|z'|)\in A_2(r,r') \Big\}\,.
\]
Calling then $S_N(t)$ the sphere with radius $t$ centered at $0$, an immediate integration in cylindrical coordinates gives, for every $0<t<1/4$,
\[
\H^{N-1}\big(A(r,r')\cap S_N(t)\big) \leq (N-1)\omega_{N-1} t^{N-2} \H^1\big(A_2(r,r')\cap S_2(t)\big) \leq 8 (N-1)\omega_{N-1}  t^N(r-r')\,.
\]
We have then
\begin{equation}\label{firstcal}\begin{split}
\psi(r,r)&-\psi(r',r') = \int_{A(r,r')} g(w)\,dw = \int_{t=0}^3 g(t) \H^{N-1}\big(A(r,r')\cap S_N(t)\big)\,dt\\
&\leq \int_0^{1/4} g(t) \H^{N-1}\big(A(r,r')\cap S_N(t)\big)\,dt+g(1/4)\int_{1/4}^3\H^{N-1}\big(A(r,r')\cap S_N(t)\big)\,dt\\
&\leq 8(N-1)\omega_{N-1}(r-r') \int_0^{1/4} g(t) t^N \,dt+ g(1/4) \big|A(r,r')\big|\leq C(r-r')\,,
\end{split}\end{equation}
where $C$ is a constant only depending on $N$ and on $g$. For every $1/2\leq r,\,r'\leq 3/2$ we have then
\begin{equation}\label{biges1}
\big|\psi(r,r)-\psi(r',r')\big| \leq C |r-r'|\,.
\end{equation}
We pass now to the second main estimate. Let us take $-1/4\leq \sigma\leq 1/4$ and let us show that $|r-r'|$ controls also $|\psi(r,r-\sigma)-\psi(r',r'-\sigma)|$. As before, without loss of generality we can assume that $r>r'$. The value of the difference $\psi(r,r-\sigma)-\psi(r',r'-\sigma)$ is then exactly as before given by an integral over the set $A(r,r')$. The only difference is that this time the function to integrate is not $g(w)$, but $g(w-S)$, where $S$ is the point having distance $\sigma$ from $O$ in the horizontal, negative direction. Figure~\ref{FigK1} shows the point $S$ in the case when $\sigma>0$. Notice that the points of $A(r,r')$ close to $O$ are much closer to $O$ than to $S$. More in general, a trivial geometric argument ensures that for every $w\in A(r,r')$ one has
\[
|w|=|w-O| \leq 2 |w-S|\,,
\]
the constant $2$ is actually not needed if $\sigma<0$. As a consequence, we have
\[
\psi(r,r-\sigma)-\psi(r',r'-\sigma) = \int_{A(r,r')} g(w-S)\,dw \leq \int_{A(r,r')} \tilde g(w)\,dw\,,
\]
where as in the proof of Lemma~\ref{+-neg} we write $\tilde g(w)=g(w/2)$. The same calculation as in~(\ref{firstcal}), keeping in mind~(\ref{tildegg}), gives then that for every $1/2\leq r,\,r'\leq 3/2$ and every $-1/4\leq \sigma\leq 1/4$
\begin{equation}\label{biges2}
\big|\psi(r,r-\sigma)-\psi(r',r'-\sigma)\big| \leq C |r-r'|\,.
\end{equation}
Let us finally pass to the third and last main estimate, which consists in taking again $-1/4\leq \sigma\leq 1/4$, and showing that $|\sigma|$ controls $|J(\sigma)+J(-\sigma)|$. Without loss of generality let us assume that $\sigma>0$. 
\begin{figure}[htbp]
\begin{tikzpicture}
\fill[blue!15!white] (0.2,-3) arc (-30:30:6) -- (-1,3) arc (36.9:-36.9:5);
\fill[blue!25!white] (-1,3) arc (36.9:-36.9:5) -- (-2.35,-3) arc (-48.6:48.6:4);
\draw[line width=1.2] (1,0) arc (0:30:6);
\draw[line width=1.2] (1,0) arc (0:-30:6);
\draw[line width=1.2] (0,0) arc (0:36.9:5);
\draw[line width=1.2] (0,0) arc (0:-36.9:5);
\draw[line width=1.2] (-1,0) arc (0:48.6:4);
\draw[line width=1.2] (-1,0) arc (0:-48.6:4);
\fill (0,0) circle (1.8pt);
\draw (0,0) node[anchor=north east]{$O$};
\draw[angle 90-angle 90] (-.97,0) -- (-.06,0);
\draw[angle 90-angle 90] (.97,0) -- (.06,0);
\draw (-.5,0) node[anchor=south]{$\sigma$};
\draw (.5,0) node[anchor=south]{$\sigma$};
\draw[angle 90-angle 90] (0.06,-.4) -- (2.5,-.4);
\draw (1.25,-.4) node[anchor=north]{$t$};
\draw (-.3,-2.75) node{$A(\sigma)$};
\draw (-1.5,-2.75) node{$A(-\sigma)$};
\draw[line width=1] (2.5,0) arc (0:127.5:2.5);
\fill (-1.52,1.98) circle (1.8pt);
\fill (-.63,2.42) circle (1.8pt);
\fill (.46,2.46) circle (1.8pt);
\draw (-1.52,1.98) node[anchor=north east]{$P^-$};
\draw (-.63,2.42) node[anchor=south west]{$P$};
\draw (.46,2.46) node[anchor=south west]{$P^+$};
\fill[blue!15!white] (-6.38,-3) arc (-14.48:14.48:12) -- (-8.46,3) arc (17.46:-17.46:10);
\fill[blue!25!white] (-8.46,3) arc (17.46:-17.46:10) -- (-10.58,-3) arc(-22.02:22.02:8);
\draw[line width=1.2] (-8,0) arc (0:17.46:10);
\draw[line width=1.2] (-8,0) arc (0:-17.46:10);
\draw[line width=1.2] (-6,0) arc (0:14.48:12);
\draw[line width=1.2] (-6,0) arc (0:-14.48:12);
\draw[line width=1.2] (-10,0) arc (0:22.02:8);
\draw[line width=1.2] (-10,0) arc (0:-22.02:8);
\draw[angle 90-angle 90] (-7.94,0) -- (-6.03,0);
\draw[angle 90-angle 90] (-9.97,0) -- (-8.06,0);
\fill (-8,0) circle (1.8pt);
\draw (-8,0) node[anchor=north east]{$O$};
\draw (-7,0) node[anchor=south]{$\sigma$};
\draw (-9,0) node[anchor=south]{$\sigma$};
\draw[dashed] (-8,-3) -- (-8,3);
\draw[dashed] (-10,-3) -- (-10,3);
\draw[dashed] (-6,-3) -- (-6,3);
\draw[angle 90-angle 90] (-5.8,0) -- (-5.8,2.5);
\draw (-5.8,1.25) node[anchor=west]{$t$};
\draw[dotted] (-10.4,2.5) -- (-6,2.5);
\fill (-6.25,2.5) circle (1.8pt);
\draw[angle 90-angle 90] (-6.25,2.7) -- (-6,2.7);
\fill (-8.32,2.5) circle (1.8pt);
\draw[angle 90-angle 90] (-8.32,2.7) -- (-8,2.7);
\fill (-10.4,2.5) circle (1.8pt);
\draw[angle 90-angle 90] (-10.4,2.7) -- (-10,2.7);
\draw[-angle 90] (-6.8,1.7) -- (-6.125,2.4);
\draw[-angle 90] (-6.8,1.7) -- (-8.16,2.4);
\draw[-angle 90] (-6.8,1.7) -- (-10.2,2.4);
\draw (-6.8,1.7) -- (-6.8,1.5);
\draw (-7.7,1) node[anchor=south west]{$\approx t^2/2$};
\draw (-7.3,-2.75) node{$A(\sigma)$};
\draw (-9.3,-2.75) node{$A(-\sigma)$};
\end{tikzpicture}
\caption{The (coloured) sets $A(\sigma)$ and $A(-\sigma)$ and the situation in the proof of~(\ref{biges3}).}\label{FigK12}
\end{figure}
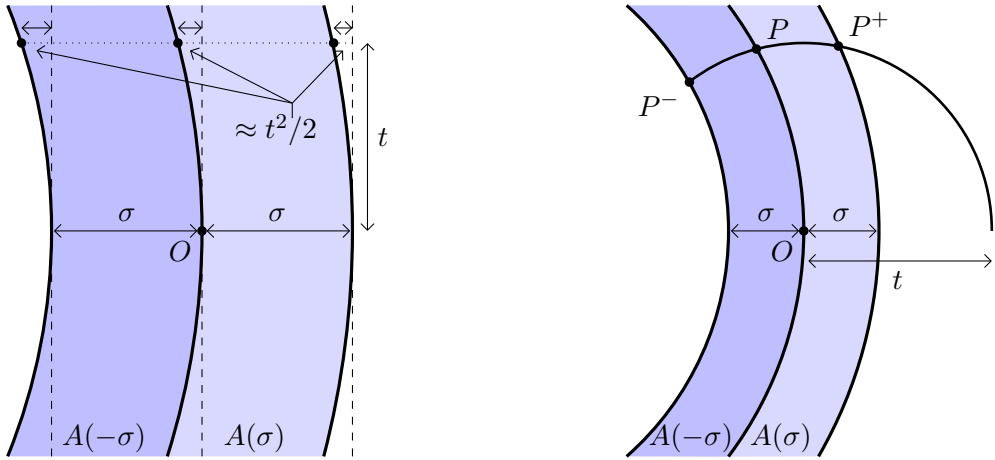
Observe that $J(\sigma)=\psi(1+\sigma,1)-\psi(1,1)$ is the integral of $g$ over an annulus $A(\sigma)$ with radii $1$ and $1+\sigma$, the origin being at the internal boundary, while $-J(-\sigma)=\psi(1,1)-\psi(1-\sigma,1)$ is the integral of $g$ over an annulus $A(-\sigma)$ with radii $1$ and $1-\sigma$, the origin being at the external boundary. Figure~\ref{FigK12} shows the annuli $A(\sigma)$ and $A(-\sigma)$ near $O$ with two different magnifications. Let us start near the origin $O$, noticing that $A(\sigma)$ and $A(-\sigma)$ are close to the slabs
\begin{align*}
C^+=\big\{(z_1,z')\in\R\times\R^{N-1}:\, 0<z_1<\sigma\big\}\,, && C^-=\big\{(z_1,z')\in\R\times\R^{N-1}:\, -\sigma<z_1<0\big\}\,.
\end{align*}
More precisely, fix any $0<t<2\sigma$, and set $S(t) = B(2\sigma) \cap\{(z_1,z'):\, |z'|=t\}$.
Since of course
\[
\int_{C^+\cap S(t)} g(w)\,d\H^{N-1}(w)=\int_{C^-\cap S(t)} g(w)\,d\H^{N-1}(w)\,,
\]
keeping in mind that $g$ is decreasing and by an immediate geometric argument (see Figure~\ref{FigK12} left) we can estimate
\[\begin{split}
\bigg|\int_{A(\sigma) \cap S(t)}& g(w)\,d\H^{N-1}(w) - \int_{A(-\sigma) \cap S(t)} g(w)\,d\H^{N-1}(w)\bigg|\\
&\leq g(t) \Big( \H^{N-1}\big(\big(A(\sigma)\Delta C^+\big) \cap S(t)\big) + \H^{N-1}\big(\big(A(-\sigma)\Delta  C^-\big)\cap S(t)\big)\Big)\\
&\leq 4(N-1)\omega_{N-1}t^N g(t)\,.
\end{split}\]
By integrating in $t$, then, we have
\begin{equation}\label{estinB2s}\begin{split}
\bigg|\int_{A(\sigma)\cap B(2\sigma)} g(w)\, dw &-\int_{A(-\sigma)\cap B(2\sigma)} g(w)\, dw\bigg|
\leq \int_{t=0}^{2\sigma} 4(N-1)\omega_{N-1} t^N g(t)\,dt \\
&\leq 8 (N-1)\omega_{N-1} \sigma \int_0^{2\sigma} g(t)t^{N-1}\,dt
 \leq C\sigma\,.
\end{split}\end{equation}
Let us now pass to consider the situation outside the ball $B(2\sigma)$. As in the proof of~(\ref{biges1}), we call $S_N(t)$ the sphere with radius $t$ centered at $0$, and we start considering the situation in the $2$-dimensional case, with circle $S_2(t)$ and annuli $A_2(\pm \sigma)$. Let us fix any $2\sigma<t<1/4$. As depicted in Figure~\ref{FigK12}, right, the circle $S_2(t)$ intersects $A_2(\sigma)$ in two symmetric arcs, and the same is true for the intersection with $A_2(-\sigma)$. Let us call $P,\,P^+$ and $P^-$ three intersection points, as in the figure, and let us call $\theta,\,\theta^+$ and $\theta^-$ the directions of the segments $OP,\, OP^+$ and $OP^-$. Notice that $\theta^+<\theta<\theta^-$, and the three directions are close to $\pi/2$ when $\sigma\ll t\ll 1$. A very simple trigonometric calculation ensures that
\begin{align}\label{vstc}
\cos\theta=-\frac t2\,, && \cos\theta^+ = \frac{-t^2+2\sigma+\sigma^2}{2t}\,, && \cos\theta^- = \frac{-t^2-2\sigma+\sigma^2}{2t}\,,
\end{align}
and since $2\sigma<t<1/4$ this implies
\begin{align*}
\theta<\theta^- <\frac 34\,\pi \,, &&\frac \pi 2<\theta< \frac 7{12} \pi\,, && \frac\pi 3<\theta^+<\theta\,,
\end{align*}
in particular $\theta^--\theta$ and $\theta-\theta^+$ are both smaller than $\pi/4$, so that
\begin{equation}\label{bddlng}\begin{split}
\big| \theta^++\theta^--2\theta\big| &=\big|(\theta^--\theta) - (\theta-\theta^+)\big|
\leq \sqrt 2 \big|\sin(\theta^--\theta) -\sin (\theta-\theta^+) \big|\\
&\leq 2 \sin\theta \big|\sin(\theta^--\theta) -\sin (\theta-\theta^+) \big|\\
&= 2 \Big| \cos\theta^++\cos\theta^--2\cos\theta-\cos\theta\Big(\cos(\theta^--\theta)+\cos(\theta-\theta^+)-2\Big)\Big|\\
&\leq 2\,\frac {\sigma^2}t+ t\Big|\big(\cos(\theta^--\theta)+\cos(\theta-\theta^+)-2\big)\Big|\\
&\leq 2\,\frac {\sigma^2}t+ \frac t 2 \Big((\theta^--\theta)^2+(\theta-\theta^+)^2\Big)\\
&\leq 2\,\frac {\sigma^2}t+  t \Big((\cos\theta^--\cos\theta)^2+(\cos\theta-\cos\theta^+)^2\Big)
\leq 5\,\frac {\sigma^2}t\leq 3\sigma\,.
\end{split}\end{equation}
We can now calculate
\[\begin{split}
\H^{N-1}\big(A(\sigma)&\cap S_N(t)\big)
=  \int_{\alpha=\theta^+}^\theta (N-1)\omega_{N-1} (t\sin\alpha)^{N-2} t\,d\alpha\\
&= (N-1)\omega_{N-1} t^{N-1} \int_{\alpha=\theta^+}^\theta (\sin\alpha)^{N-2}\,d\alpha\\
&= (N-1)\omega_{N-1} t^{N-1} \bigg((\sin\theta)^{N-2}(\theta-\theta^+) + \int_{\alpha=\theta^+}^\theta (\sin\alpha)^{N-2}-(\sin\theta)^{N-2}\,d\alpha\bigg)\,,
\end{split}\]
and similarly
\[
\frac{\H^{N-1}\big(A(-\sigma)\cap S_N(t)\big)}{(N-1)\omega_{N-1}}=  t^{N-1} \bigg((\sin\theta)^{N-2}(\theta^--\theta) + \int_{\alpha=\theta}^{\theta^-} (\sin\alpha)^{N-2}-(\sin\theta)^{N-2}\,d\alpha\bigg)\,,
\]
so that
\begin{equation}\label{partcal}
\frac{\Big|\H^{N-1}\big(A(\sigma)\cap S_N(t)\big)-\H^{N-1}\big(A(-\sigma)\cap S_N(t)\big)\Big|}{(N-1)\omega_{N-1}} \leq
 t^{N-1} \Big(\big|\theta^++\theta^--2\theta\big| + K\Big)\,,
\end{equation}
where
\[
K=\bigg|\int_{\theta^+}^\theta (\sin\alpha)^{N-2}-(\sin\theta)^{N-2}\,d\alpha - \int_\theta^{\theta^-} (\sin\alpha)^{N-2}-(\sin\theta)^{N-2}\,d\alpha\bigg|\,.
\]
We claim that
\begin{equation}\label{boundK}
K\leq 9(N-2) \sigma\,.
\end{equation}
To show this inequality, we first observe that by~(\ref{vstc}) we have
\begin{align}\label{stret}
|\theta^+-\theta|\leq \sqrt 2 |\cos\theta^+-\cos\theta|\leq  2\, \frac\sigma t\,, && |\theta^--\theta|\leq \sqrt 2 |\cos\theta^--\cos\theta|\leq 2\, \frac\sigma t\,.
\end{align}
We distinguish then two cases. If $t\geq\sqrt \sigma$, then again by~(\ref{vstc}) we have $|\cos\theta^+|,\,|\cos\theta^-|\leq 2t$, thus for every $\theta^+<\alpha<\theta^-$ by~(\ref{stret}) one has
\[
|(\sin\alpha)^{N-2}-(\sin\theta)^{N-2}| \leq (N-2)|\sin\alpha-\sin\theta| \leq 2(N-2)t |\alpha-\theta|\leq 4(N-2) \sigma \,,
\]
so that
\[
K \leq 16(N-2)\,\frac{\sigma^2 }t\leq 8(N-2) \sigma\,,
\]
and then~(\ref{boundK}) is proved in the case $t\geq \sqrt\sigma$. Suppose insted that $2\sigma<t<\sqrt\sigma$. In this case, for every $\theta^+<\alpha<\theta^-$ by~(\ref{stret}) we have that
\begin{equation}\label{inthca}
|(\sin\alpha)^{N-2}-(\sin\theta)^{N-2}| \leq (N-2)|\sin\alpha-\sin\theta| \leq (N-2) |\alpha-\theta|\leq 2(N-2)\,\frac\sigma t \,.
\end{equation}
Let us now call $\hat\theta$ and $\hat\theta^+$ the directions obtained by a vertical mirror symmetry of $\theta$ and $\theta^+$, that is, $\hat\theta=\pi-\theta$ and $\hat\theta^+=\pi-\theta^+$. Observe that, again by~(\ref{vstc}) and since $t<\sqrt\sigma$, we have $\theta^+<\hat\theta<\pi/2 < \theta<\hat\theta^+<\theta^-$. Since by symmetry we have
\[
\int_{\theta^+}^{\hat\theta} (\sin\alpha)^{N-2}-(\sin\theta)^{N-2}\,d\alpha = \int^{\hat\theta^+}_\theta (\sin\alpha)^{N-2}-(\sin\theta)^{N-2}\,d\alpha\,,
\]
by~(\ref{inthca}) and~(\ref{bddlng}) we have
\[\begin{split}
K &\leq 2(N-2)\,\frac\sigma t \Big( (\theta-\hat\theta) + (\theta^--\hat\theta^+)\Big)= 2(N-2)\,\frac\sigma t \Big( 2 (\theta-\hat\theta) + (\theta^++\theta^--2\theta)\Big)\\
&\leq 2(N-2)\,\frac\sigma t \Big( 2\sqrt 2 (\cos\hat\theta-\cos\theta) + 3\sigma\Big)
= 2(N-2)\,\frac\sigma t \Big( 2\sqrt 2 t + 3\sigma\Big)
\leq 9(N-2) \sigma\,,
\end{split}\]
thus~(\ref{boundK}) is proved also in the case $t<\sqrt\sigma$. Inserting~(\ref{boundK}) into~(\ref{partcal}) and keeping in mind~(\ref{bddlng}), we have then for every $2\sigma<t<1/4$
\[
\Big|\H^{N-1}\big(A(\sigma)\cap S_N(t)\big)-\H^{N-1}\big(A(-\sigma)\cap S_N(t)\big)\Big| \leq C t^{N-1} \sigma\,.
\]
Putting together this inequality and~(\ref{estinB2s}), we obtain the third main estimate, that is,
\begin{equation}\label{biges3}\begin{split}
\big|J(\sigma)&+J(-\sigma)\big|= \bigg|\int_{A(\sigma)} g(w)\,dw - \int_{A(-\sigma)} g(w)\,dw\bigg| \\
&\leq C\sigma + \int_{t=2\sigma}^3 g(t) \Big| \H^{N-1}\big(A(\sigma)\cap S_N(t)\big)-\H^{N-1}\big(A(-\sigma)\cap S_N(t)\big)\Big|\,dt\\
&\leq C\sigma +C \int_{t=2\sigma}^{1/4} g(t) t^{N-1} \sigma\,dt+ g(1/4)\Big( \big|A(\sigma)\big|+\big|A(-\sigma)\big|\Big)
\leq C\sigma\,.
\end{split}\end{equation}
Thanks to the main estimates~(\ref{biges1}), (\ref{biges2}) and~(\ref{biges3}), it is immediate to prove~(\ref{claimK1}). The third estimate in~(\ref{claimK1}) is simply~(\ref{biges3}) with $\sigma=|\tau|$. The first estimate in~(\ref{claimK1}) comes by putting together~(\ref{biges2}) with $r=\rho+\tau$, $r'=1+\tau$ and $\sigma=\tau$, and~(\ref{biges1}) with $r=\rho$ and $r'=1$, getting
\[\begin{split}
\big|\psi(\rho+\tau,\rho)-\psi(\rho,\rho) -J(\tau)\big|&=
\big|\psi(\rho+\tau,\rho)-\psi(\rho,\rho) -\psi(1+\tau,1)+\psi(1,1)\big| \\
&\leq \big|\psi(\rho+\tau,\rho) -\psi(1+\tau,1)\big| +|\psi(\rho,\rho)-\psi(1,1)|\leq C|\rho-1|\,.
\end{split}\]
Finally, the second estimate in~(\ref{claimK1}) comes by putting together~(\ref{biges2}) with $r=1$, $r'=1-\tau$ and $\sigma=-\tau$, and~(\ref{biges3}) with $\sigma=|\tau|$, obtaining
\[
\big|\psi(1,1+\tau)-\psi(1,1)+J(\tau)\big| \leq \big|\psi(1,1+\tau)-\psi(1-\tau,1)\big|+\big|J(-\tau)+J(\tau)\big|\leq C|\tau|\,.
\]
The proof is then concluded.
\end{proof}

We are now ready to give the proof of our main result.

\proofof{Theorem~\mref{main}}
Let $\eps>0$ be given, and let $E$ be a minimizer of $\F_\eps$ among sets of volume $\omega_N$. We already know by Lemma~\ref{lemreg} that, if $\eps$ is small enough, then up to a translation $E$ is of the form $E(u)$ given by~(\ref{defEu}) for a uniformly small function $u\in W^{1,2}(\S^{N-1})$. Let $E^+$ and $E^-$ be defined as in~(\ref{defE+E-}), and notice that the sets $E^+\subseteq \R^N\setminus B$ and $E^-\subseteq B$ have the same volume, and are done by points uniformly close to the sphere $\S^{N-1}$. We can write
\begin{equation}\label{calcener}
\En(E)-\En(B) = 2\En(B,E^+) - 2\En(B,E^-) + \En(E^+)+\En(E^-)-2\En(E^+,E^-)\,.
\end{equation}
Using then the notation introduced in~(\ref{defpsiJ}), we can also calculate
\begin{equation}\label{1stpce}\begin{split}
\En(B,E^+)-\En(B,E^-)&=
\iint_{B\times E^+} g(z-w)\,dz\,dw - \iint_{B\times E^-} g(z-w)\,dz\,dw\\
&=\int_{E^+} \psi(1,|z|)\, dz - \int_{E^-} \psi(1,|z|)\,dz\\
&=\int_{E^+} \psi(1,|z|)-\psi(1,1)\, dz - \int_{E^-} \psi(1,|z|)-\psi(1,1)\,dz\,.
\end{split}\end{equation}
Let us now observe that, also by Lemma~\ref{estiK1},
\[\begin{split}
\int_{E^+} \psi(1,|z|)-\psi(1,1)\, dz
&=\int_{x\in\partial B} \int_{t=0}^{u^+(x)} (1+t)^{N-1} \Big(\psi(1,1+t)-\psi(1,1)\Big)\,dt\,dx\\
&=\int_{\partial B} \int_0^{u^+(x)} \psi(1,1+t)-\psi(1,1)\,dt\,dx + O(\|u\|_{L^2}^2)\\
&= -\int_{\partial B} \int_0^{u^+(x)} J(t) \,dt\,dx + O(\|u\|_{L^2}^2)\,,
\end{split}\]
and in the very same way
\[
\int_{E^-} \psi(1,|z|)-\psi(1,1)\, dz= -\int_{\partial B} \int_0^{u^-(x)} J(-t) \,dt\,dx + O(\|u\|_{L^2}^2)\,.
\]
The equality~(\ref{1stpce}) becomes then
\[
\En(B,E^+)-\En(B,E^-) =  -\int_{\partial B} \int_0^{u^+(x)} J(t) \,dt\,dx +\int_{\partial B} \int_0^{u^-(x)} J(-t) \,dt\,dx + O(\|u\|_{L^2}^2)\,,
\]
which inserted in~(\ref{calcener}) and recalling Lemma~\ref{+-neg} gives
\begin{equation}\label{calcener2}\begin{split}
\En(E)-\En(B)&= \En(E^+) -2\int_{\partial B} \int_0^{u^+(x)} J(t) \,dt\,dx\\ &\qquad+\En(E^-) +2\int_{\partial B} \int_0^{u^-(x)} J(-t) \,dt\,dx + O(\|u\|_{W^{1,2}}^2)\,.
\end{split}\end{equation}
In order to evaluate $\En(E^+)$ and $\En(E^-)$, we call for brevity
\[
\varphi(x,y,s,t)=(1+t)^{N-1} (1+s)^{N-1} g\big((1+t)x-(1+s)y\big)\,,
\]
so that by definition
\[\begin{split}
\En(E^+)&=\int_{x\in\partial B} \int_{t=0}^{u^+(x)} \int_{y\in\partial B} \int_{s=0}^{u^+(y)} \varphi(x,y,s,t)\,ds\,dy\,dt\,dx\\
&=\int_{\partial B} \int_0^{u^+(x)} \int_{\partial B} \int_0^{u^+(x)} \varphi(x,y,s,t)\,ds\,dy\,dt\,dx\\
&\qquad\qquad +\int_{\partial B} \int_0^{u^+(x)} \int_{\partial B} \int_{u^+(x)}^{u^+(y)}  \varphi(x,y,s,t)\,ds\,dy\,dt\,dx= K_1 + K_2\,,
\end{split}\]
where $K_1$ and $K_2$ denote the two terms of the last equality.\par

Let us start working on $K_2$. As in the proof of Lemma~\ref{+-neg}, we can define $\tilde g(v)=g(v/2)$ for every $v\in\R^N$, and observe that for every $x,\,y\in \partial B$ and $s,\,t\in (-1/2,1/2)$ one has
\[
g\big((1+t)x-(1+s)y\big) \leq \tilde g(y-x)\,.
\]
As a consequence, for every pair $x,\,y\in\partial B$, we can estimate
\[\begin{split}
\int_0^{u^+(x)} \int_{u^+(x)}^{u^+(y)}  &\varphi(x,y,s,t)\,ds\,dt+\int_0^{u^+(y)} \int_{u^+(y)}^{u^+(x)}  \varphi(x,y,s,t)\,ds\,dt\\
&= - \int_{u^+(x)}^{u^+(y)}  \int_{u^+(x)}^{u^+(y)} \varphi(x,y,s,t)\,ds\,dt\geq -\bigg(\frac 32\bigg)^{2N-2} \int_{u^+(x)}^{u^+(y)}  \int_{u^+(x)}^{u^+(y)}\tilde g(y-x)\,ds\,dt\,.
\end{split}\]
Inserting this estimate in the definition of $K_2$, and applying again Lemma~\ref{genest} with $\tilde g$ in place of $g$, which is admissible by~(\ref{tildegg}), we have
\begin{equation}\label{claimK2}
K_2 \geq -\frac{3^{2N-2}}{2^{2N-1}}\, \int_{\partial B}\int_{\partial B} \big(u^+(y)-u^+(x)\big)^2\tilde g(y-x)\geq  - C \|u\|_{W^{1,2}}^2\,,
\end{equation}
where as usual $C$ is a constant depending only on $N$ and $g$.\par
Let us now pass to evaluate $K_1$, which can be rewritten as
\[\begin{split}
K_1&=\int_{\partial B} \int_0^{u^+(x)} (1+t)^{N-1} \int_{B(1+u^+(x))\setminus B(1)} g\big((1+t)x-w\big)\,dw\,dt\,dx\\
&=\int_{\partial B} \int_0^{u^+(x)} (1+t)^{N-1}  \Big(\psi(1+u^+(x),1+t)-\psi(1,1+t)\Big)\,dt\,dx\\
&=\int_{\partial B} \int_0^{u^+(x)} \psi(1+u^+(x),1+t)-\psi(1,1+t)\,dt\,dx+ O(\|u\|_{L^2}^2)\,.
\end{split}\]
Rewriting $\psi(1+u^+(x),1+t)-\psi(1,1+t)$ as
\[
\psi(1+u^+(x),1+t)-\psi(1+t,1+t)+\psi(1+t,1+t)-\psi(1,1+t)
\]
and keeping in mind Lemma~\ref{estiK1}, we obtain
\[\begin{split}
K_1&\geq \int_{\partial B} \int_0^{u^+(x)} J(u^+(x)-t) +J(t) -3C u^+(x)\,dt\,dx+ O(\|u\|_{L^2}^2)\\
&=\int_{\partial B} \int_0^{u^+(x)} J(u^+(x)-t) +J(t) \,dt\,dx+ O(\|u\|_{L^2}^2)
=2\int_{\partial B} \int_0^{u^+(x)}J(t) \,dt\,dx+ O(\|u\|_{L^2}^2)\,.
\end{split}\]
Since $\En(E^+)=K_1+K_2$, this equality and~(\ref{claimK2}) give
\[
\En(E^+) \geq 2\int_{\partial B} \int_0^{u^+(x)}J(t) \,dt\,dx- C \|u\|_{W^{1,2}}^2\,.
\]
The very same calculations with $E^-$ in place of $E^+$ give
\[
\En(E^-) \geq -2\int_{\partial B} \int_0^{u^-(x)}J(-t) \,dt\,dx- C \|u\|_{W^{1,2}}^2\,.
\]
Putting these last two estimates into~(\ref{calcener2}), we have then $\En(E)-\En(B)\geq - C \|u\|_{W^{1,2}}^2$. By~(\ref{marcogippo}), we have then
\[
\F_\eps(E) \geq \F_\eps(B) + \big(C_N - \eps C\big) \|u\|_{W^{1,2}}^2\,,
\]
hence of course the unique minimizer of the energy $\F_\eps$ is the ball $B$ as soon as $\eps\ll 1$.
\end{proof}


\begin{thebibliography}{99}

\bibitem{BC} M. Bonacini \& R. Cristoferi, Local and global minimality results for a nonlocal isoperimetric problem on $\R^N$, SIAM J. Math. Anal. {\bf 46} (2014), no. 4, 2310--€"2349.
\bibitem{CL} M. Cicalese \& G. P. Leonardi. A selection principle for the sharp quantitative isoperimetric
inequality. Arch. Ration. Mech. Anal. {\bf 206} (2012), no. 2, 617--643.
\bibitem{F2M3} A. Figalli, N. Fusco, F. Maggi, V. Millot \& M. Morini, Isoperimetry and stability properties of balls with respect to nonlocal energies, Comm. Math. Phys. {\bf 336} (2015), no. 1, 441--507.
\bibitem{F} B. Fuglede, Stability in the isoperimetric problem for convex or nearly spherical domains in $\R^{n}$. Trans. Amer. Math. Soc. {\bf 314} (1989), no. 2, 619--638.
\bibitem{J} V. Julin, Isoperimetric problem with a Coulomb repulsive term, Indiana Univ. Math. J. {\bf 63} (214), no. 1, 77--89.
\bibitem{KM1} H. Kn\"upfer \& C. Muratov,  On an isoperimetric problem with a competing nonlocal term I: The planar case, Comm. Pure Appl. Math. {\bf 66} (2013), no. 7, 1129--1162. 
\bibitem{KM2} H. Kn\"upfer \& C. Muratov, On an isoperimetric problem with a competing nonlocal term II: The general case, Comm. Pure Appl. Math. {\bf 67} (2014), no. 12, 1974--1994. 
\bibitem{M} F. Maggi, Sets of finite perimeter and geometric variational problems, volume 135 of
Cambridge Studies in Advanced Mathematics. Cambridge University Press, Cambridge, 2012.
An introduction to geometric measure theory.
\bibitem{NP} M. Novaga \& A. Pratelli, Minimisers of a general Riesz-type Problem, preprint (2020).


\end{thebibliography}
\end{document}